\newcommand{\R}{\mathbb{R}}
\newcommand{\ba}{\mathbf{a}}
\newcommand{\bc}{\mathbf{c}}
\newcommand{\bx}{\mathbf{x}}
\newcommand{\bu}{\mathbf{u}}
\newcommand{\bv}{\mathbf{v}}
\newcommand{\bw}{\mathbf{w}}
\newcommand{\by}{\mathbf{y}}
\newcommand{\bz}{\mathbf{0}}
\newcommand{\im}{\textup{im}}
\newcommand{\Span}{\textup{Span}}
\newcommand{\interior}{\textup{int}}
\newcommand{\norm}[1]{\left\lVert#1\right\rVert}
\newtheorem{theorem}{Theorem}[section]
\newtheorem{lemma}[theorem]{Lemma}
\newtheorem{proposition}[theorem]{Proposition}
\newtheorem{definition}[theorem]{Definition}
\newtheorem{remark}[theorem]{Remark}
\newtheorem{prop_def}[theorem]{Proposition/Definition}
\newcommand{\Addresses}{{% additional braces for segregating \footnotesize
  \bigskip
  \footnotesize

  J.~Love, \textsc{San Francisco, California}\par\nopagebreak
  \textit{E-mail address}: \texttt{jack.eddie.love@gmail.com}

}}
\title{The Stable Limit of Moduli Spaces of Polygons}
\author{Jack Love}
\date{\today}
\begin{document}
\maketitle

{\bf Keywords} Polygons, moduli spaces, configuration spaces

{\bf Mathematical Subject Classification (2020)} Primary: 58D29; Secondary: 58A35, 52C25

\abstract{
Polygon spaces have been studied extensively (\cite{millson}, \cite{Hausmann1996PolygonSA}, \cite{millson2}, \cite{grassmannians}, \cite{signature}, \cite{history}, \cite{mandini}, \cite{manon}, \cite{farber}, \cite{higgs}, \cite{kourganoff}, \cite{Cantarella2016TheSG}), and yet missing from the literature is a simple property that every polygon has: dimension. This is distinct (possibly) from the dimension of the ambient space in which the polygon lives. A square, in the usual sense of the word, is $2$-dimensional no matter the dimension of the ambient space in which it is embedded. If the ambient space has dimension greater than or equal to $3$ we may bend the square along a diagonal to produce a $3$-dimensional polygon with the same edge-lengths. And yet even if the dimension of the ambient space is large, no amount of bending of the square will produce a polygon of dimension larger than $3$. We generalize this idea to show that there are only finitely many moduli spaces of polygons with given edge-lengths, even as the ambient dimension increases without bound.
}

\section{Introduction}

A {\em polygon} is a chain of line segments, called edges, that begins and ends at the origin in some Euclidean space $\R^d$. A {\em polygon space} is the collection of all polygons in $\R^d$ with a given vector of edge-lengths $\ell=(l_1, \ldots, l_n)$. A {\em moduli space of polygons} is the quotient of a polygon space by rotations in $\R^d$. For context and a point of contrast, we mention the work of Michael Farber and Viktor Fromm in \cite{farber}. It is well-known that polygon spaces are smooth manifolds if $\ell$ is sufficiently generic. Farber and Fromm showed that upon fixing the dimension $d$ of the ambient space and allowing $\ell$ to vary generically, the diffeomorphism types of polygon spaces are in bijection with the components of a discrete geometric object. Rather than looking at diffeomorphism types of polygon spaces with fixed $d$ and varying $\ell$, we look at homeomorphism types of moduli spaces of polygons with fixed $\ell$ and varying $d$. Since any polygon in $\R^d$ can be embedded in $\R^{d+1}$, we define a directed system of moduli spaces of polygons by fixing $\ell=(l_1, \ldots, l_n)$ and letting $d$ increase without bound. Our main theorem is presented in Section \ref{sec:stable}, but we state a version of it here:

\begin{theorem}\label{thm_intro}
Given a vector $\ell=(l_1, \ldots, l_n)$ of edge-lengths, the directed system of moduli spaces of $\ell$-gons stabilizes when the ambient dimension is equal to $n$.
\end{theorem}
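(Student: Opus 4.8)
The plan is to realize the directed system explicitly and then reduce the theorem to an elementary fact about extending orthogonal maps between linear subspaces. For each $d$ write $\mathrm{Pol}_d(\ell)=\{(v_1,\dots,v_n)\in(\R^d)^n:\|v_i\|=l_i\text{ for all }i,\ \sum_i v_i=\bz\}$ for the space of $\ell$-gons, let $SO(d)$ act diagonally by rotations, and set $M_d(\ell)=\mathrm{Pol}_d(\ell)/SO(d)$. The standard inclusion $\R^d\hookrightarrow\R^{d+1}$ (onto the first $d$ coordinates) is equivariant for $SO(d)\subset SO(d+1)$, so it induces an inclusion $\mathrm{Pol}_d(\ell)\hookrightarrow\mathrm{Pol}_{d+1}(\ell)$ and hence a continuous map $\iota_d\colon M_d(\ell)\to M_{d+1}(\ell)$; these are the bonding maps of the system. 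I will prove that $\iota_d$ is a homeomorphism for every $d\ge n$, which is exactly the claimed stabilization.

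First I would dispose of the point-set topology: $\mathrm{Pol}_d(\ell)$ is closed and bounded in $(\R^d)^n$, hence compact, so $M_d(\ell)$ is compact; and since $SO(d)$ is a compact group acting continuously, the orbit relation is closed and $M_d(\ell)$ is Hausdorff. Therefore it is enough to show that $\iota_d$ is a bijection of orbit sets for $d\ge n$, because a continuous bijection from a compact space to a Hausdorff space is a homeomorphism.

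The key elementary observation is that, since $\sum_i v_i=\bz$, the edge vectors of any $\ell$-gon span a linear subspace of dimension at most $n-1$. Surjectivity of $\iota_d$ (in fact for all $d\ge n-1$) is then immediate: an $\ell$-gon in $\R^{d+1}$ has its edge vectors inside a subspace $U$ with $\dim U\le n-1\le d$, so $U$ has positive codimension and some $g\in SO(d+1)$ carries $U$ into $\R^d$, moving the polygon into $\mathrm{Pol}_d(\ell)$. For injectivity, suppose $P=(v_i)$ and $Q=(v_i')$ lie in $\mathrm{Pol}_d(\ell)$ with $Q=g\cdot P$ for some $g\in SO(d+1)$. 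Set $W=\Span\{v_i\}$ and $W'=\Span\{v_i'\}$; both sit inside $\R^d$, and $g$ restricts to an orthogonal isomorphism $W\to W'$. I would extend this restriction to an element $h\in SO(d)$ by choosing a suitable orthogonal isomorphism between the complements $W^\perp$ and $(W')^\perp$ taken inside $\R^d$: they share the dimension $d-\dim W\ge d-(n-1)\ge 1$ since $d\ge n$, so there is enough freedom to pick it so that $\det h=1$. Since $h$ agrees with $g$ on $W$ it sends each $v_i$ to $v_i'$, so $h\cdot P=Q$ and $P,Q$ are already $SO(d)$-equivalent; thus $\iota_d$ is injective.

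I expect the determinant bookkeeping in this last step to be the only genuinely delicate point, and it is exactly what forces the stable range to begin at $n$ rather than $n-1$. When $d=n-1$ and a polygon is full-dimensional in $\R^{d}$, the complement $W^\perp$ is trivial, there is no freedom left to fix orientation, and a rotation of $\R^{n}$ that restricts to an orientation-reversing map of the hyperplane $\R^{n-1}$ can genuinely identify a polygon with its mirror image, which need not be a rotated copy of it; so $M_{n-1}(\ell)\to M_n(\ell)$ can fail to be injective and the range cannot start earlier. (Quotienting instead by the full orthogonal group would remove this obstruction and push stabilization down to $n-1$.) Modulo that care, everything else is routine, giving $M_d(\ell)\cong M_n(\ell)$ for all $d\ge n$.
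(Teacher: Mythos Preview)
Your argument is correct and follows essentially the same route as the paper: both reduce surjectivity to the dimension bound $\dim\Span\{v_i\}\le n-1$ and reduce injectivity to the fact that an orthogonal map on a proper subspace of $\R^d$ extends to an element of $SO(d)$. Your determinant bookkeeping is exactly the content of the paper's Proposition~\ref{prop:equal_orbits}, and your surjectivity argument is exactly Proposition~\ref{prop:surjective}.

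Two points of comparison are worth noting. First, you add something the paper actually glosses over: the paper's proof of Theorem~\ref{thm:stable} establishes that $\varphi_d$ is a continuous bijection and then declares it a homeomorphism without justifying continuity of the inverse. Your compactness/Hausdorff argument supplies this cleanly. Second, you only sketch the sharpness (that the system does \emph{not} stabilize before $d=n$), whereas the paper proves it. Your parenthetical about full-dimensional polygons in $\R^{n-1}$ presumes such polygons exist, but that is nontrivial: the paper spends Lemma~\ref{lem:vertex_handle} and Proposition~\ref{prop:dimensionful} constructing, for $\ell\in\interior(C_n)$, an explicit ``bending'' that raises polygon dimension one step at a time from $2$ up to $n-1$. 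Without that construction (or something equivalent) you have shown $M_d(\ell)\cong M_n(\ell)$ for $d\ge n$ but not that $n$ is the first index at which this happens. If Theorem~\ref{thm_intro} is read as the one-sided statement ``the system is stable from $d=n$ onward,'' your proof is complete; if it is read (as the paper's Theorem~\ref{thm:stable} makes explicit) as an if-and-only-if, you still owe the existence of an $(n-1)$-dimensional $\ell$-gon.
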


At the crux of this result is the notion of polygon dimension. We show that the dimension of a polygon with $n$ edges is bounded by a function of $n$, and also plays a role in the relationship between reflections and rotations of the polygon. These intermediate results give conditions for when the arrows in the directed system mentioned above are either injective or surjective and ultimately tell us that they are homeomorphisms when $d\geq n$. Taken together with the result of Farber and Fromm, this implies that upon fixing the number $n$ of edges, there are only finitely many homeomorphism types of polygon spaces even as $\ell$ varies generically throughout $\R^n$ and $d$ goes to infinity.

In Section \ref{sec:definitions} we define the directed system of moduli spaces of polygons and introduce polygon dimension. In Section \ref{sec:dimension} we look at the possible dimensions for polygons in a given polygon space, and Section \ref{sec:orbits} looks at the interplay between polygon dimension, ambient space dimension, and orbits of polygons under rotations and reflections. The results of these sections will allow us to state and prove our main theorem in Section \ref{sec:stable}. Lastly, in Section \ref{sec:example} we illustrate our result with an example and use the example to motivate new questions.

\begin{figure}
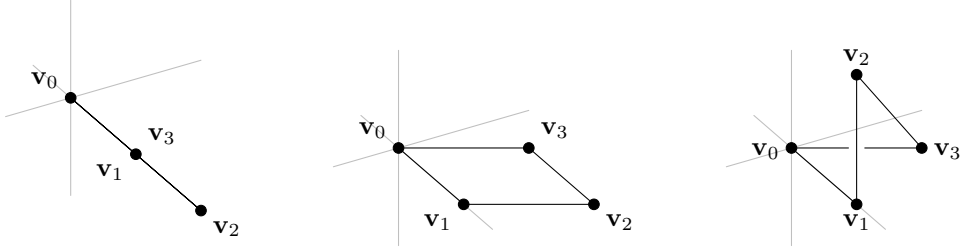

\centering
\includestandalone[scale=1]{./1d}\qquad
\includestandalone[scale=1]{./2d}\qquad
\includestandalone[scale=1]{./3d}
\caption{Pictures of $1$-, $2$-, and $3$-dimensional $\ell$-gons in $\R^3$, for $\ell=(1,1,1,1)$.} 
\label{fig:polygons}
\end{figure}

\subsection{Acknowledgments}\label{subsec:acknowledgments}

This work is based on the author's Ph.D. thesis at George Mason University, begun under Chris Manon and directed by Sean Lawton. The author is grateful to them both for their guidance, as well as to Jim Lawrence, Neil Epstein, and Rebecca Goldin for their helpful comments. The author also acknowledges the work of John Millson and Michael Kapovich, and Michael Farber and Viktor Fromm, whose papers provided much of the background and context for what is presented here.

\section{Definitions}\label{sec:definitions}

The goal of this section is to define the directed system of moduli spaces of $\ell$-gons, and introduce polygon dimension which will be our main tool in proving that these systems stabilize. We begin with the definition of polygon space.

\begin{definition}\label{def:vertexDescription}
Given $n\geq 3$, $\ell=(l_1, \ldots, l_n)\in\R_{\scriptscriptstyle{>0}}^n$, and $d\geq 2$, the {\em polygon space} $V_d(\ell)$ is the topological subspace of $\R^{d(n-1)}$ defined as
\[V_d(\ell)=\left\{P=(\bv_1, \ldots, \bv_{n-1})\in \R^{d(n-1)} \colon \norm{ \bv_i-\bv_{i-1} }=l_i,\, i=1, \ldots, n\right\},\]
where $\bv_0=\bv_n=\bz$. Elements of $V_d(\ell)$ are called {\em polygons} or $\ell${\em -gons}. Given an $(l_1, \ldots, l_n)$-gon $(\bv_1, \ldots, \bv_{n-1})$, the $\bv_i$ are its {\em vertices} and the $l_i$ are its {\em edge-lengths}.
\end{definition}

The polygon space $V_d(\ell)$ admits natural $SO(d)$ and $O(d)$ actions by the map
\[V_d(\ell)\times G\to V_d(\ell), \quad ((\bv_1, \ldots, \bv_{n-1}),T) \mapsto (T(\bv_1), \ldots, T(\bv_{n-1})),\]
where $G$ is $SO(d)$ or $O(d)$. We write $T(P)$ to mean the image of $(P, T)$ under this map and we let $G(P)=\{T(P) \colon T\in G\}$ denote the orbit of $P$ under the action of $G$. Thus two polygons in $V_d(\ell)$ belong to the same $SO(d)$ orbit if they are rotations of one another, and they belong to the same $O(d)$ orbit if they are rotations or reflections of one another. 

\begin{definition}
The {\em moduli space of $\ell$-gons in $\R^d$}, denoted $M_d(\ell)$, is the quotient space $V_d(\ell)/SO(d)$. We let $\pi:V_d(\ell)\to M_d(\ell)$ denote the canonical projection, and given $P=(\bv_1, \ldots, \bv_{n-1})\in V_d(\ell)$ we let $[P]=[\bv_1, \ldots, \bv_{n-1}]$ denote $\pi(P)$.
\end{definition}

Since any $\ell$-gon in $\R^d$ can be thought of as an $\ell$-gon in $\R^{d+1}$ by embedding it in a codimension-$1$ hyperplane, there is a natural map $M_d(\ell)\to M_{d+1}(\ell)$. In this way, we are led to a directed system of moduli spaces of $\ell$-gons for fixed $\ell$. We make this concrete in the following proposition/definition.

\begin{prop_def}\label{prop:direct_system}
Let $\R^2\xrightarrow{f_2}\R^3\xrightarrow{f_3}\R^4\xrightarrow{f_4}\cdots$ be any directed system of linear Euclidean isometries. Fix $\ell\in\R_{\scriptscriptstyle{>0}}^n$ and for $d\geq 2$, define the maps
\begin{align*}
F_d:V_d(\ell) &\to V_{d+1}(\ell) \\
(\bv_1, \ldots \bv_{n-1}) &\mapsto (f_d(\bv_1), \ldots, f_d(\bv_{n-1}))
\end{align*}
and
\begin{align*}
\varphi_d:M_d(\ell)&\to M_{d+1}(\ell) \\
[P] &\mapsto [F_d(P)].
\end{align*}
Then $\varphi_d$ is continuous and the directed system $\langle M_d(\ell), \varphi_d \rangle_{d\geq 2}$ is independent of the choice of maps $f_d$. Thus given $\ell\in\R_{\scriptscriptstyle{>0}}^n$ and any directed system $\langle \R^d, f_d\rangle_{d\geq 2}$ of linear Euclidean isometries, we call $\langle M_d(\ell), \varphi_d \rangle_{d\geq 2}$ the {\em directed system of moduli spaces of $\ell$-gons}.
\end{prop_def}

\begin{proof}
Let $\ell\in\R_{\scriptscriptstyle{>0}}^n$, fix a directed system $\langle \R^d, f_d\rangle_{d\geq 2}$ of linear Euclidean isometries, and consider the diagram
\[
\begin{tikzcd}
V_d(\ell) \arrow[r, "F_d"] \arrow[d, "\pi_d"] & V_{d+1}(\ell) \arrow[d, "\pi_{d+1}"] \\
M_d(\ell) \arrow{r}{\varphi_d} & M_{d+1}(\ell)
\end{tikzcd}
\]
where $\pi_k:V_k(\ell)\to M_k(\ell)$ is the canonical projection map. As with any topological transformation group, these projection maps are open and continuous. The map $F_d$ is continuous as well, and thus if $U\subset M_{d+1}(\ell)$ is open, then $\pi_d(F_d^{-1}(\pi_{d+1}^{-1}(U)))$ is open. But this is precisely $\varphi_d^{-1}(U)$, and thus $\varphi_d$ is continuous.

To show that $\langle M_d(\ell), \varphi_d \rangle_{d\geq 2}$ is independent of our choice of $f_d$'s, let $\langle \R^d, f_d\rangle_{d\geq 2}$ and $\langle \R^d, g_d\rangle_{d\geq 2}$ be two directed systems of linear Euclidean isometries and define the maps
\begin{align*}
F_d:V_d(\ell) &\to V_{d+1}(\ell) \\
(\bv_1, \ldots \bv_{n-1}) &\mapsto (f_d(\bv_1), \ldots, f_d(\bv_{n-1}))
\end{align*}
and
\begin{align*}
G_d:V_d(\ell) &\to V_{d+1}(\ell) \\
(\bv_1, \ldots \bv_{n-1}) &\mapsto (g_d(\bv_1), \ldots, g_d(\bv_{n-1})).
\end{align*}
Since $SO(d+1)$ acts transitively on orthonormal bases of proper linear subspaces of $\R^{d+1}$, for every $d\geq 2$ and every $P=(\bv_1, \ldots \bv_{n-1})\in V_d(\ell)$ there exists $T_d\in SO(d+1)$ such that $g_d(\bv_i)=T_d(f_d(\bv_i))$, and thus $[G_d(P)]=[F_d(P)]=\varphi_d([P])$.
\end{proof}

At each step $M_d(\ell)\xrightarrow{\varphi_d} M_{d+1}(\ell)$ in this system, potentially two things are happening. First, there are new $\ell$-gons appearing in $M_{d+1}(\ell)$ that are not in the image of $\varphi_d$, in which case $\varphi_d$ is not surjective; second, polygons $P$ and $Q$ that were not identified in $M_d(\ell)$ become identified in $M_{d+1}(\ell)$, in which case $\varphi_d$ is not injective. Our main result is that neither of these possibilities occurs when, and only when, $d\geq n$, and thus the system stabilizes there. All of these results rely on the notion of polygon dimension, and we conclude this section with its definition.

\begin{definition}\label{def:dimension}
Let $P=(\bv_1, \ldots, \bv_{n-1})\in V_d(\ell)$. The {\em dimension} of $P$, denoted $\dim(P)$, is the dimension of $\Span(\{\bv_1, \ldots, \bv_{n-1}\})$ as a linear subspace of $\R^d$, and we say that $P$ is a $\dim(P)$-dimensional polygon. Given $[P]\in M_d(\ell)$ the dimension of $[P]$ is the dimension of $P$.
\end{definition}

\section{Dimensions of polygons in $V_d(\ell)$}\label{sec:dimension}

In this section, we show that, as long as $\ell$ is not too degenerate, the polygon space $V_d(\ell)$ contains polygons of every dimension from $2$ up through an upper bound dependent on $d$ and $\ell$. Let us first consider the conditions on a vector $\ell\in\R_{\scriptscriptstyle{>0}}^n$ for which the polygon space $V_d(\ell)$ is nonempty. Just as the triangle inequalities give necessary and sufficient conditions for three positive real numbers to be the edge-lengths of a triangle, the polygon inequalities
\begin{equation}
\label{eqn:ineqs}
l_i\leq\sum_{j\neq i}l_j, \quad l_i>0 \qquad i=1, \ldots, n
\end{equation}
give necessary and sufficient conditions for $(l_1,\ldots, l_n)$ to be the edge-lengths of a polygon. (See Lemma 1 in \cite{millson}, keeping in mind that their polygons have been normalized so that $\sum_{i=1}^n l_i = 1$ and edges of length 0 are allowed.) In other words, letting $C_n$ denote the solution space of the inequalities \eqref{eqn:ineqs}, the polygon space $V_d(\ell)$ is nonempty if and only if $\ell\in C_n$. It may be helpful to think of $C_n$ as ``almost a polyhedron'', in that the topological closure
\[\overline{C_n}=\left\{\ell\in\R^n\,\middle|\, l_i\leq\sum_{j\neq i}l_j, \quad l_i\geq 0 \qquad i=1, \ldots, n\right\}\]
is a polyhedral cone pointed at the origin. Now we give an upper bound on the dimension of polygons in $V_d(\ell)$ that holds for all $\ell\in C_n$.

\begin{proposition}\label{prop:dimension}
Let $\ell\in C_n$ and let $P\in V_d(\ell)$. Then $\dim(P)\leq\min\{n-1,d\}$.
\end{proposition}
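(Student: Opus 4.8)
The plan is to split the claimed inequality into its two halves, $\dim(P)\le d$ and $\dim(P)\le n-1$, each of which follows immediately from the definition of polygon dimension; the hypothesis $\ell\in C_n$ is needed only so that $V_d(\ell)\neq\varnothing$ and hence so that there is a polygon $P$ to speak of, and it plays no role in the estimate itself. So this is a short argument, and I do not anticipate any genuine obstacle — the interest of the section lies not in this upper bound but in later showing it is (nearly) achieved.

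For the bound $\dim(P)\le d$: by Definition \ref{def:dimension}, $\dim(P)$ is the dimension of $\Span(\{\bv_1,\ldots,\bv_{n-1}\})$ viewed as a linear subspace of $\R^d$, and every linear subspace of $\R^d$ has dimension at most $d$.

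For the bound $\dim(P)\le n-1$: the space $\Span(\{\bv_1,\ldots,\bv_{n-1}\})$ is by construction spanned by the $n-1$ vectors $\bv_1,\ldots,\bv_{n-1}$, so its dimension is at most $n-1$. (Equivalently, one may phrase this in terms of edge vectors: setting $\be_i=\bv_i-\bv_{i-1}$ one has $\bv_i=\be_1+\cdots+\be_i$, so $\Span(\{\bv_1,\ldots,\bv_{n-1}\})=\Span(\{\be_1,\ldots,\be_n\})$, and since $\sum_{i=1}^n\be_i=\bv_n-\bv_0=\bz$ the $n$ edge vectors satisfy a nontrivial linear relation, forcing their span to have dimension at most $n-1$.) Taking the smaller of the two bounds gives $\dim(P)\le\min\{n-1,d\}$, as desired.
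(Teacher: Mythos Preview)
Your proof is correct and matches the paper's own argument essentially verbatim: the paper simply notes that $\Span(\bv_1,\ldots,\bv_{n-1})$ is a linear subspace of $\R^d$ spanned by $n-1$ vectors, which is exactly your two-part observation. The parenthetical edge-vector reformulation is a harmless aside but not needed.
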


\begin{proof}
This is immediate since $\Span(\bv_1, \ldots, \bv_{n-1})$ is a linear subspace of $\R^d$ spanned by $n-1$ vectors.
\end{proof}

This simple observation---that a function of $\ell$ bounds the dimension of $\ell$-gons---is central to our main theorem. Together with the upcoming results, it implies both the injectivity and surjectivity of the maps $\varphi_d$ for large $d$. Before moving forward we characterize a subset of $C_n$ for which the corresponding moduli spaces are rather trivial. The border of $C_n$---the intersection of $C_n$ with its boundary---consists of those $\ell$ such that $l_i=\sum_{j\neq i}l_j$ for some $i=1, \ldots, n$. If $\ell$ lies on the border of $C_n$ then $V_d(\ell)$ consists only of 1-dimensional polygons, since the condition $l_i=\sum_{j\neq i}l_j$ forces $\bv_i$ and $\bv_{i-1}$ to be the endpoints of a line segment containing the rest of the vertices. In this case, $SO(d)$ acts transitively on $V_d(\ell)$ so $M_d(\ell)$ is a singleton. Thus from here forward, we will assume $\ell\in\interior(C_n)$.

It is well-known that if $\ell\in\interior(C_n)$ then $V_d(\ell)$ contains $2$-dimensional polygons. Proposition \ref{prop:dimensionful} below says that $V_d(\ell)$ contains $k$-dimensional polygons for all values of $k$ from $2$ to $\min\{d, n-1\}$. In the proof we construct a $(k+1)$-dimensional $\ell$-gon from a $k$-dimensional $\ell$-gon for any $2\leq k<\min\{d, n-1\}$. In the construction, we use a vertex with a special property, and Lemma \ref{lem:vertex_handle} shows that such a vertex exists. Refer to Figures \ref{fig:vertex_handle} and \ref{fig:folding} when reading Lemma \ref{lem:vertex_handle} and Proposition \ref{prop:dimensionful}, respectively.

\begin{lemma}\label{lem:vertex_handle}
Let $P=(\bv_1, \ldots, \bv_{n-1})\in V_d(\ell)$ with $2\leq \dim(P)< n-1$. For each $i=1, \ldots, n-1$, let $L_i$ denote the line through $\bv_{i-1}$ and $\bv_{i+1}$, and let $U_i$ denote $\Span(\{\bv_j\}_{j\neq i})$. There exists $i\in\{1, \ldots, n\}$ such that $\bv_i\in U_i \setminus L_i$ .
\end{lemma}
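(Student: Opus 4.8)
The plan is to argue by contradiction. Suppose that for every $i \in \{1,\ldots,n-1\}$ one has $\bv_i \notin U_i$ or $\bv_i \in L_i$; I will derive a contradiction from $2 \le k < n-1$, where $k:=\dim(P)$. Set $\be_i := \bv_i-\bv_{i-1}$, so that $\|\be_i\|=l_i>0$, $\sum_{i=1}^n\be_i=\bz$, and $\Span(\{\be_1,\ldots,\be_n\})=\Span(P)$ (since $\bv_i=\be_1+\cdots+\be_i$). Call an index $i$ \emph{essential} if $\bv_i\notin U_i$. Then the essential vertices $\{\bv_i\}$ are linearly independent — if one were a linear combination of the others it would lie in its own $U_i$ — hence pairwise distinct, nonzero (as $\bz\in U_i$ always), and at most $k$ in number. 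For a non-essential index $i$ the hypothesis forces $\bv_i\in L_i$, i.e.\ $\bv_{i-1},\bv_i,\bv_{i+1}$ collinear; the degenerate subcase $\bv_{i-1}=\bv_{i+1}$ is disposed of at once ($l_i>0$ makes $\bv_i\neq\bv_{i-1}$), so we may read this as $\be_i\parallel\be_{i+1}$.

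Next I would pass to the combinatorics of the cyclic sequence $\be_1,\ldots,\be_n$. It decomposes into $t$ maximal \emph{runs} of consecutive parallel edges, with unit directions $\bd_1,\ldots,\bd_t$, of which $r\le t$ are distinct. Since $\Span(P)$ is the span of the $\be_i$, hence of the $\bd_\rho$, we get $r\ge k$ and so $t\ge r\ge k\ge 2$. The \emph{turns} (transitions between consecutive runs) occur at non-flat vertices, and by the previous paragraph every non-flat vertex in $\{1,\ldots,n-1\}$ is essential; hence at most $k$ turns avoid the basepoint $\bv_0=\bv_n=\bz$, giving $t\le k+1$, so $t\in\{k,k+1\}$. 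Writing $\bp_\rho$ (index $a_\rho$) for the turning vertex ending run $\rho$, with $\bp_0=\bp_t$, I record that $\bp_\rho-\bp_{\rho-1}=s_\rho\bd_\rho$ for scalars $s_\rho$ (the signed net length of run $\rho$) and hence $\sum_{\rho=1}^t s_\rho\bd_\rho=\bz$.

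The case $t=k$ is quick: then $r=k$, the $\bd_\rho$ are linearly independent, the relation forces every $s_\rho=0$, and so all turning points coincide; but at least one of the $k\ge 2$ turns lies at a vertex of $\{1,\ldots,n-1\}$ and is therefore essential, which makes two essential vertices coincide or an essential vertex vanish — both impossible. So $t=k+1$; the turn-count then forces the basepoint to be a turn, so $\bp_0=\bp_{k+1}=\bz$ while $\bp_1,\ldots,\bp_k$ are exactly the essential vertices — a basis of $\Span(P)$. The step I expect to carry the weight is the claim that no run $\sigma$ has $\ge 2$ edges. Such a run has $\ge 3$ vertices, so for each $\rho\in\{1,\ldots,k\}$ it has two \emph{consecutive} vertices whose indices are both $\neq a_\rho$; their difference is a nonzero multiple of $\bd_\sigma$, giving $\bd_\sigma\in U_{a_\rho}$. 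But $U_{a_\rho}\supseteq\Span(\{\bp_{\rho'}:\rho'\neq\rho\})$, which in the basis $\bp_1,\ldots,\bp_k$ is the coordinate hyperplane $\{x_\rho=0\}$; if $\bd_\sigma$ had a nonzero $\rho$-th coordinate, $U_{a_\rho}$ would be all of $\Span(P)$ and hence contain $\bp_\rho=\bv_{a_\rho}$, contradicting that $a_\rho$ is essential. So every coordinate of $\bd_\sigma$ vanishes, i.e.\ $\bd_\sigma=\bz$ — absurd for a unit vector. Hence each run is a single edge, $n=t=k+1$, contradicting $k\le n-2$.

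The heart of the argument is that final step in the case $t=k+1$: the essentiality conditions at the $k$ turning points are exactly enough to pin every run direction to $\bz$. Everything else is careful index bookkeeping. The one recurring subtlety is degeneracy — vertices that coincide or vanish, runs of zero net displacement — but in each instance this is precisely what the distinctness and nonvanishing of essential vertices forbid, so it never becomes a genuine obstruction.
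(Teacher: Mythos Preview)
Your proof is correct, but it takes a considerably more elaborate route than the paper's. The paper argues directly: since $\dim(P)<n-1$ the vertices are linearly dependent, so some $\bv_t$ lies in $U_t$; if $\bv_t\notin L_t$ we are done, and otherwise the collinearity of $\bv_{t-1},\bv_t,\bv_{t+1}$ gives $\bv_{t+1}\in\Span(\bv_{t-1},\bv_t)\subseteq U_{t+1}$, so one repeats the argument at $t+1$. If this never terminates, all vertices are collinear, contradicting $\dim(P)\ge 2$. That is the whole proof --- a short propagation argument. Your approach instead builds a global combinatorial structure (essential vertices, maximal runs of parallel edges, turn counting) and then eliminates the two surviving cases $t=k$ and $t=k+1$ by separate arguments. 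Everything you wrote checks out, including the key step that a run of length $\ge 2$ would force its direction $\bd_\sigma$ into every $U_{a_\rho}$ and hence into every coordinate hyperplane, making $\bd_\sigma=\bz$. But the machinery is disproportionate to the task: the paper's observation that ``$\bv_t\in L_t$ pushes the dependency forward to $\bv_{t+1}$'' already captures the phenomenon you formalize with runs and turns, and reaches the contradiction in a fraction of the space. Your argument does extract finer structural information (the essential vertices form a basis of $\Span(P)$, and in the hypothetical failing configuration every run is a single edge), which is not needed here but could conceivably be useful in a more refined analysis.
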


\begin{proof}
Let $P\in V_d(\ell)$ with $2\leq \dim(P)<n-1$. Since $\dim(P)<n-1$ there exists a linear dependence among $\{\bv_1, \ldots, \bv_{n-1}\}$, and thus $\bv_{t}\in U_{t}$ for some $t=1, \ldots, n-1$. If $\bv_{t}\not\in L_{t}$ we are done by taking $i=t$, so suppose $\bv_{t}\in L_{t}$. Then $\bv_{t-1}, \bv_t, \bv_{t+1}$ are colinear and in particular $\bv_{t+1}\in\Span(\{\bv_{t-1}, \bv_t\})$, so $\bv_{t+1}\in U_{t+1}$. Now if $\bv_{t+1}\not\in L_{t+1}$ we are done by taking $i=t+1$ so suppose $\bv_{t+1}\in L_{t+1}$. Then by the same argument as above $\bv_{t+2}\in U_{t+2}$, and we also have $\bv_{t-1}, \bv_t, \bv_{t+1}, \bv_{t+2}$ colinear. Continuing in this way, if $\bv_i\not\in U_i\setminus L_i$ for any $i \in \{1, \ldots, n-1\}$, then all of $\bv_0, \bv_1, \ldots, \bv_{n-1}$ are co-linear, contradicting our assumption that $\dim(P)\geq2$.
\end{proof}

\begin{figure}[h]
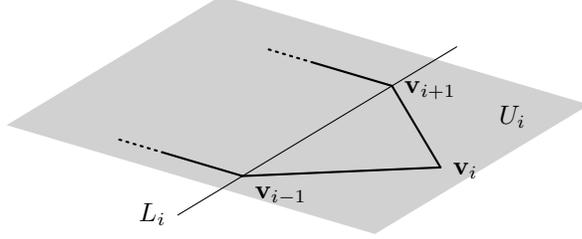

\centering
\includestandalone[scale=1]{./bending2}
\caption[]{If the dimension of a polygon is small enough, there exists a vertex ($\bv_i$ in the figure) that lies in the span of the other vertices and is not co-linear with its neighbors.} 
\label{fig:vertex_handle}
\end{figure}

\begin{proposition}\label{prop:dimensionful}
If $\ell\in\interior(C_n)$, then $V_d(\ell)$ contains $k$-dimensional polygons for all $k\in\{2, \ldots, \min\{d, n-1\}\}$.
\end{proposition}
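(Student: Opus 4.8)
The plan is to induct on the dimension $k$: first exhibit a $2$-dimensional $\ell$-gon outright, and then show how to turn any $k$-dimensional $\ell$-gon with $2\le k<\min\{d,n-1\}$ into a $(k+1)$-dimensional one by ``bending'' it at the vertex supplied by Lemma \ref{lem:vertex_handle}. For the base case, note that since $\ell\in\interior(C_n)$ the inequalities \eqref{eqn:ineqs} hold strictly, so $\ell$ is the edge-length vector of a nondegenerate planar polygon (essentially Lemma 1 of \cite{millson}); as $d\ge 2$, this polygon lies in $V_d(\ell)$ and is $2$-dimensional. If $\min\{d,n-1\}=2$ we are done, so from now on assume $\min\{d,n-1\}\ge 3$.

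For the inductive step, let $P=(\bv_1,\dots,\bv_{n-1})\in V_d(\ell)$ with $\dim(P)=k$ and $2\le k<\min\{d,n-1\}$, and keep the notation $L_i$, $U_i$ from Lemma \ref{lem:vertex_handle}. Since $k<n-1$, that lemma produces an index $i$ with $\bv_i\in U_i\setminus L_i$; as $\bv_i\in U_i$, the span of all the vertices equals $U_i$, so $\dim U_i=k$, and since $k<d$, $U_i$ is a proper subspace of $\R^d$. I would then introduce the set of admissible new positions for the $i$-th vertex,
\[
S=\bigl\{\bx\in\R^d:\ \norm{\bx-\bv_{i-1}}=l_i,\ \norm{\bx-\bv_{i+1}}=l_{i+1}\bigr\},
\]
which contains $\bv_i$; replacing $\bv_i$ by any $\bv_i'\in S$ yields another element of $V_d(\ell)$, and if in addition $\bv_i'\notin U_i$ then the new polygon spans $\Span(U_i\cup\{\bv_i'\})$, which has dimension $k+1$. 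Thus the whole step reduces to the claim that $S\not\subseteq U_i$, and I expect this geometric claim to be the main obstacle.

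To prove $S\not\subseteq U_i$, I would argue that $S$ is ``fat'' relative to $U_i$. If $\bv_{i-1}=\bv_{i+1}$, then $S$ is the entire sphere of radius $l_i$ about $\bv_{i-1}$, so $\Aff(S)=\R^d\not\subseteq U_i$. If $\bv_{i-1}\ne\bv_{i+1}$, then subtracting the two defining equations places $S$ in the hyperplane $H=\{\bx:\ \langle\bx,\bv_{i+1}-\bv_{i-1}\rangle=\mathrm{const}\}$, and inside $H$ the set $S$ is a sphere centered at the orthogonal projection of $\bv_i$ onto $L_i$ with radius equal to the distance from $\bv_i$ to $L_i$, which is positive precisely because $\bv_i\notin L_i$. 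Hence $\Aff(S)=H$ has dimension $d-1\ge k=\dim U_i$. If $S\subseteq U_i$ then $H\subseteq U_i$, so $H=U_i$ by dimension count; in particular $\bz\in H$, so $U_i=H$ is the linear hyperplane $(\bv_{i+1}-\bv_{i-1})^{\perp}$, forcing $\bv_{i+1}-\bv_{i-1}\perp U_i$. But $\bv_{i-1},\bv_{i+1}\in U_i$ gives $\bv_{i+1}-\bv_{i-1}\in U_i$, so $\bv_{i+1}-\bv_{i-1}=\bz$, a contradiction. Therefore $S\not\subseteq U_i$; choosing $\bv_i'\in S\setminus U_i$ and bending $P$ there completes the inductive step, and induction on $k$ finishes the proof. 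The only mildly delicate point is the harmless degenerate subcase $\bv_{i-1}=\bv_{i+1}$; everything else is elementary Euclidean geometry once the right object $S$ is isolated and the roles of the two hypotheses ($k<d$ gives room in the ambient space, $\bv_i\notin L_i$ gives a positive-radius hinge) are made explicit.
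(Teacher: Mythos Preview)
Your proof is correct and follows essentially the same strategy as the paper: induct on $k$, invoke Lemma~\ref{lem:vertex_handle} to locate a vertex $\bv_i\in U_i\setminus L_i$, and bend the polygon there to raise the dimension by one. The only difference is in the bending step itself: the paper explicitly writes down the new vertex $\bw=\bc+\norm{\bv_i-\bc}\,\bu$ (with $\bc$ the foot of the perpendicular from $\bv_i$ to $L_i$ and $\bu\notin U_i$) and checks the edge-lengths by Pythagoras, whereas you describe the whole locus $S$ of admissible replacements and show $S\not\subseteq U_i$ by an affine-hull dimension count---the same geometry packaged slightly differently, with your version having the minor advantage of handling the degenerate case $\bv_{i-1}=\bv_{i+1}$ cleanly.
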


\begin{proof}
Let $\ell\in\interior(C_n)$ and let $P=(\bv_1, \ldots, \bv_{n-1})\in V_d(\ell)$ with $2\leq \dim(P)<\min\{d, n-1\}$. For $i=1, \ldots, n-1$ define $L_i$ and $U_i$ as in Lemma \ref{lem:vertex_handle}, and choose $i$ so that $\bv_i\in U_i \setminus L_i$. Let $\bc$ be the orthogonal projection of $\bv_i$ onto $L_i$ and let $\bu\not\in U_i$ be a unit vector. Define $\bw:=\bc+\norm{ \bv_i-\bc } \bu$ and $Q=(\bv_1, \ldots, \bv_{i-1}, \bw, \bv_{i+1}, \ldots, \bv_{n-1})$. Since $\bv_{i-1},\, \bv_i, \,\bv_{i+1}$ are not colinear, $\bv_i\neq\bc$ and thus $\bw\not\in U_i$, so $\dim(Q)=\dim(P)+1$. It remains to show that $\norm{ \bw - \bv_{i-1} }=l_{i-1}$ and $\norm{ \bw - \bv_{i+1} }= l_{i+1}$ so that $Q\in V_d(\ell)$. Let $\by$ be $\bv_{i-1}$ or $\bv_{i+1}$. We have
\begin{align}
\norm{ \bw - \by }^2 &= \norm{ \bw - \bc }^2+ \norm{ \bc -\by } ^2 \label{eqn:pythag1}\\
&= \norm{ \bv_i-\bc }^2 + \norm{ \bc - \by }^2 \label{eqn:rotate}\\
&= \norm{ \bv_i-\by }^2, \label{eqn:pythag2}
\end{align}
where Equations \eqref{eqn:pythag1} and \eqref{eqn:pythag2} are the Pythagorean theorem, and Equation \eqref{eqn:rotate} follows from the definition of $\bw$. The result follows.
\end{proof}

\begin{figure}[h]
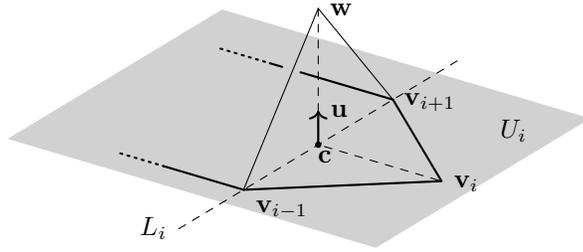

\centering
\includestandalone[scale=1]{./bending}
\caption{The vertex $\bv_i$, guaranteed by Lemma \ref{lem:vertex_handle}, of a $k$-dimensional $\ell$-gon for sufficiently small $k$, allows the $\ell$-gon to bend along the line $L_i$ into a $(k+1)$-dimensional $\ell$-gon, whose vertices are identical with those of the original $\ell$-gon except that $\bw$ has replaced $\bv_i$ (here, $\bu$ is orthogonal to $U_i$ to make the picture easy to read, but the construction above does not require this).} 
\label{fig:folding}
\end{figure}

In the following remark, we observe two implications of the results in this section on the directed system $\langle M_d(\ell), \varphi_d\rangle$---we will make these observations concrete in Section \ref{sec:stable}, Proposition \ref{prop:surjective}.

\begin{remark}\label{rem:surjective}
The map $\varphi_d$ is not surjective if $d<n-1$ since new polygons of higher dimensions continue to appear until we reach $M_{n-1}(\ell)$, and $\varphi_d$ is surjective for $d=n-1$ (and thereafter) since every polygon in $M_n(\ell)$ has dimension less than $n$ and is thus the image of a polygon in $M_{n-1}(\ell)$.
\end{remark}

Given these observations about the surjectivity of $\varphi_d$, it remains to make similar observations about the injectivity of $\varphi_d$. We make these observations in the following section.

\section{Orbits in the directed system $\langle M_d(\ell), \varphi_d \rangle$}\label{sec:orbits}

We begin with some preliminary remarks about $O(d)$ and $SO(d)$ and their actions on polygons. The orthogonal group $O(d)$ has two connected components: one is $SO(d)$ and the other, consisting of reflections through codimension-1 hyperplanes, we will call $SO(d)^-$. Though $SO(d)^-$ is not a group we may none-the-less consider its action on $V_d(\ell)$ by defining $SO(d)^-(P)=\{T(P) \colon T\in SO(d)^-\}$ for every $P\in V_d(\ell)$. Thus for any $P\in V_d(\ell)$ we have
\begin{equation}\label{eqn:partition1
}O(d)(P)=SO(d)(P)\cup SO(d)^-(P).
\end{equation}
Moreover, $SO(d)$ acts transitively by left (or right) multiplication on $SO(d)^-$, so that given any $T\in SO(d)^-$ we may recover all of $SO(d)^-$ by composing $T$ with elements of $SO(d)$. Thus given any $P\in V_d(\ell)$ and any $T\in SO(d)^-$ we have
\begin{equation}\label{eqn:partition2}
O(d)(P)=SO(d)(P)\cup SO(d)(T(P)).
\end{equation}
We also note that $SO(d)$ acts transitively on orthonormal bases of $k$-dimensional linear subspaces of $\R^d$ if $k<d$. Thus if $X$ and $Y$ are proper linear subspaces of $\R^d$ of dimension $k$, there exists $T\in SO(d)$ so that $T(X)=Y$. Our final preliminary is the following lemma.

\begin{lemma}\label{lem:rotating_subspaces}
If $X$ and $Y$ are proper linear subspaces of $\R^d$ with $X\subset Y$ and $T\in SO(d)$ is such that $T(X)\subset Y$, then there exists $S\in SO(d)$ such that $S(Y)=Y$ and $S|_X=T|_X$.
\end{lemma}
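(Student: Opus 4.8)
The plan is to reduce the problem to a statement about extending an isometry on a subspace to an isometry of the ambient space that also stabilizes an intermediate subspace. First I would set up the relevant subspaces carefully: let $k = \dim X$ and $m = \dim Y$, so $k \leq m < d$, and note that $T(X)$ is a $k$-dimensional subspace of $Y$. Inside $Y$, both $X$ and $T(X)$ are $k$-dimensional subspaces; since $SO(m)$ acts transitively on $k$-dimensional subspaces of $Y$ when $k < m$ (and trivially when $k = m$, where $X = T(X) = Y$), there is a rotation $R_0$ of $Y$ carrying $X$ to $T(X)$. The idea is then to compare $R_0$ with $T|_X$: the composite $(R_0)^{-1} \circ T|_X$ is an isometry from $X$ onto $X$, i.e. an element of $O(X) \cong O(k)$, which I can extend to a rotation $R_1$ of $Y$ fixing the orthogonal complement of $X$ inside $Y$ (padding by $\pm 1$ on that complement to fix the determinant, which is possible since $\dim Y > \dim X$ leaves at least one coordinate to absorb a sign, unless $X=Y$ in which case we handle things directly). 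Then $S' := R_0 \circ R_1 \in SO(Y)$ satisfies $S'(Y) = Y$ and $S'|_X = T|_X$.

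The second half of the plan is to promote $S' \in SO(Y)$ to an element $S \in SO(d)$ with $S(Y) = Y$ and $S|_X = T|_X$: simply extend $S'$ by the identity on $Y^\perp$ (the orthogonal complement of $Y$ in $\R^d$). Since $\det(S') = 1$ and the identity on $Y^\perp$ contributes $+1$, the resulting block-diagonal map $S = S' \oplus \mathrm{id}_{Y^\perp}$ lies in $SO(d)$, stabilizes $Y$, and restricts to $T|_X$ on $X$ by construction. I would write $S$ explicitly in an orthonormal basis adapted to the flag $X \subset Y \subset \R^d$ to make the block structure transparent.

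The main obstacle is the determinant bookkeeping. The composite $(R_0)^{-1}\circ T|_X$ may be orientation-reversing on $X$, so extending it to $Y$ requires an extra reflection somewhere; this is harmless precisely because $X$ is a \emph{proper} subspace of $Y$ is not what we're given—rather, $X \subseteq Y$ and $Y$ is proper in $\R^d$—so if $X = Y$ I cannot absorb the sign inside $Y$ and must instead absorb it in $Y^\perp \subseteq \R^d$, which is nonempty since $Y$ is proper. So the clean way to organize the argument is to build the sign-correcting reflection in $Y^\perp$ rather than inside $Y$: first extend any isometry $X \to T(X)$ arbitrarily to a \emph{possibly orientation-reversing} orthogonal map of $Y$ carrying $X$ to $T(X)$ and agreeing with $T$ on $X$, then extend to $\R^d$ by the identity on $Y^\perp$, and finally, if the determinant came out $-1$, compose with a reflection of $\R^d$ that fixes $Y$ pointwise and negates one chosen vector of $Y^\perp$. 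That reflection stabilizes $Y$, fixes $X$ pointwise, and flips the sign, yielding the desired $S \in SO(d)$. I expect the only real care needed is to confirm at each stage that the maps genuinely agree with $T$ on all of $X$, not merely carry $X$ to the right subspace.
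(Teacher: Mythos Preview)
Your approach is correct, but it is more elaborate than necessary. The paper's proof bypasses all the determinant bookkeeping in one stroke: choose an orthonormal basis $\{\bu_1,\ldots,\bu_k\}$ for $X$, note that $\{T(\bu_1),\ldots,T(\bu_k)\}$ is an orthonormal basis for $T(X)\subset Y$, extend each of these to an orthonormal basis of $Y$, and then simply invoke the fact that $SO(d)$ acts transitively on orthonormal $l$-frames whenever $l<d$ (which holds here since $Y$ is proper in $\R^d$). The resulting $S\in SO(d)$ maps one basis of $Y$ to the other, so $S(Y)=Y$, and by construction $S(\bu_i)=T(\bu_i)$, so $S|_X=T|_X$. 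The sign issue you worry about never arises explicitly because the transitivity of $SO(d)$ on frames of proper subspaces already encodes the freedom to flip a sign in the orthogonal complement of $Y$; your staged construction (build inside $Y$, then extend, then repair the determinant in $Y^\perp$) is effectively re-deriving that transitivity fact by hand. Your version has the minor advantage of making the determinant correction explicit, but the paper's version is shorter and avoids the case split on whether $X=Y$.
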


\begin{proof}
Let $X$, $Y$, and $T$ be as above. Let $\{\bu_1, \ldots, \bu_k\}$ be an orthonormal basis for $X$. Then $\{T(\bu_1), \ldots, T(\bu_k)\}$ is an orthonormal basis for $T(X)$. If $X$ and $T(X)$ are both subspaces of $Y$ we may extend these to orthonormal bases $\{\bu_1, \ldots, \bu_k, \bv_{k+1} \ldots, \bv_l\}$ and $\{T(\bu_1), \ldots, T(\bu_k), \bw_{k+1}, \ldots, \bw_l\}$ for $Y$. Since $Y$ is a proper linear subspace of $\R^d$ there exists $S\in SO(d)$ such that $S(\bu_i)=T(\bu_i)$ for $i=1, \ldots, k$ and $S(\bv_i)=\bw_i$ for $i=k+1, \ldots, l$. Thus $S(Y)=Y$ and $S\vert_X=T\vert_X$.
\end{proof}

The main results of this section are two interpretations of the informal statement ``reflections in $\R^d$ are rotations in $\R^{d+1}$''. One interpretation, Proposition \ref{prop:polygon_orbit_embedding}, is that if $P$ and $Q$ belong to the same $SO(d+1)$ orbit in $V_{d+1}(\ell)$, then they come from polygons in the same $O(d)$ orbit of $V_d(\ell)$. The second interpretation, Proposition \ref{prop:equal_orbits}, is that if the dimension of a polygon is less than the dimension of the ambient space, then every reflection of that polygon may be obtained by rotation. We first state two lemmas that lead to Proposition \ref{prop:polygon_orbit_embedding}.

\begin{lemma}\label{lem:orbit_embedding}
Let $f_d:\R^d\to\R^{d+1}$ be a linear Euclidean isometry. For every $T\in O(d)$ there exists $S\in SO(d+1)$ such that $S(f_d(\bx))=f_d(T(\bx))$ for all $\bx\in\R^d$. Conversely, for every $S\in SO(d+1)$ such that $S(\im(f_d))=\im(f_d)$, there exists $T\in O(d)$ such that $S(f_d(\bx))=f_d(T(\bx))$ for all $\bx\in\R^d$.
\end{lemma}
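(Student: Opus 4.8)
The plan is to exploit that $f_d$ identifies $\R^d$ isometrically with the hyperplane $W:=\im(f_d)\subset\R^{d+1}$, so that both assertions become statements about extending and restricting orthogonal maps of a codimension-one subspace. First I would record the basic setup: since $f_d$ is a linear isometry, $W$ is a $d$-dimensional, hence proper, linear subspace of $\R^{d+1}$, and $f_d\colon\R^d\to W$ is an isometric linear isomorphism, so the inverse $f_d^{-1}\colon W\to\R^d$ is available.

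For the forward implication, given $T\in O(d)$ I would fix an orthonormal basis $\{\be_1,\ldots,\be_d\}$ of $\R^d$ and push it forward in two ways: $\{f_d(\be_1),\ldots,f_d(\be_d)\}$ and $\{f_d(T\be_1),\ldots,f_d(T\be_d)\}$ are each orthonormal bases of the proper subspace $W$. I would then invoke the transitivity fact already recorded in the preliminaries---$SO(d+1)$ acts transitively on orthonormal bases of $k$-dimensional subspaces of $\R^{d+1}$ when $k<d+1$, applied here with $k=d$---to produce $S\in SO(d+1)$ with $S(f_d(\be_j))=f_d(T\be_j)$ for all $j$. Since $S\circ f_d$ and $f_d\circ T$ are linear and agree on a basis of $\R^d$, they agree on all of $\R^d$. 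The one point requiring care---essentially the only obstacle---is landing in $SO(d+1)$ rather than merely $O(d+1)$ when $\det T=-1$; this is exactly what the transitivity statement delivers (unpacked, one chooses the sign of a unit normal to $W$ so that orientations match), so I would lean on that rather than redo the determinant bookkeeping by hand.

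For the converse, given $S\in SO(d+1)$ with $S(W)=W$, I would note that $S$ then restricts to a bijective linear map $S|_W\colon W\to W$, which is an isometry because $S$ is, hence an orthogonal automorphism of $W$. Setting $T:=f_d^{-1}\circ S|_W\circ f_d\colon\R^d\to\R^d$, this is a composition of linear isometries, so $T\in O(d)$, and $S(f_d(\bx))=f_d(T(\bx))$ for all $\bx\in\R^d$ holds by construction. This direction is immediate once $S|_W$ is seen to be a well-defined orthogonal map, so no real difficulty arises there.
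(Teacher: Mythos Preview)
Your proof is correct, and close in spirit to the paper's, but the forward direction is organized differently. The paper writes down an explicit embedding $\epsilon\colon O(d)\to SO(d+1)$, sending $T$ to the block-diagonal matrix $T\oplus(\pm\bm{1})$ (with respect to a basis extending $\{f_d(\ba_i)\}$, choosing the sign on the normal line to force determinant $+1$), and then observes that $\epsilon(T)\circ f_d=f_d\circ T$. You instead invoke the transitivity of $SO(d+1)$ on orthonormal $d$-frames as a black box to produce $S$. Both arguments encode the same mechanism---adjusting the action on the one-dimensional $W^\perp$ to repair the determinant---but the paper's explicit $\epsilon$ pays off in its version of the converse: it simply asserts that any $S\in SO(d+1)$ preserving $\im(f_d)$ lies in $\im(\epsilon)$ and takes $T=\epsilon^{-1}(S)$. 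Your converse, defining $T:=f_d^{-1}\circ S|_W\circ f_d$ directly, is arguably cleaner since it sidesteps the need to verify $S\in\im(\epsilon)$.
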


\begin{proof}
Let $\{\ba_1, \ldots, \ba_d\}$ be a basis for $\R^d$ and let $f_d:\R^d\to\R^{d+1}$ be a linear Euclidean isometry. There is a natural embedding
\begin{align*}
\epsilon:O(d)&\to SO(d+1) \\
T &\mapsto\begin{cases}
T\oplus\bm{1} &\text{ if }T\in SO(d) \\
T\oplus -\bm{1} &\text{ if }T\in SO(d)^-
\end{cases}
\end{align*}
where $\bm{1}$ is the $1\times 1$ identity matrix, and the matrix $\epsilon(T)$ is written with respect to a basis for $\R^{d+1}$ whose first $d$ elements are $\{f_d(\ba_1), \ldots, f_d(\ba_d)\}$. Thus if $T\in O(d)$ then $\epsilon(T)\in SO(d+1)$ and $\epsilon(T)(f_d(\bx))=f_d(T(\bx))$ for all $\bx\in\R^d$. Conversely, let $S\in SO(d+1)$ such that $S(\im(f_d))=\im(f_d)$. Then $S\in\im(\epsilon)$ and we have $S(f_d(\bx))=f_d(\epsilon^{-1}(S)(\bx))$ for all $\bx\in\R^d$.
\end{proof}

Lemma \ref{lem:orbit_embedding} says that rotating a point in the image of $f_d$ is the same as rotating or reflecting its preimage in $\R^d$ and then pushing it forward. Lemma \ref{lem:polygon_orbit_embedding} extends this idea from points to polygons.

\begin{lemma}\label{lem:polygon_orbit_embedding}
Given $P\in V_d(\ell)$ and $T\in O(d)$, there exists $S\in SO(d+1)$ such that $S(F_d(P))=F_d(T(P))$. Conversely, given $F_d(P)\in\im(F_d)$ and $S\in SO(d+1)$ such that $S(F_d(P))\in\im(F_d)$, there exists $T\in O(d)$ such that $S(F_d(P))=F_d(T(P))$.
\end{lemma}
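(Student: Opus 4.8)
The plan is to reduce the statement to the pointwise result of Lemma \ref{lem:orbit_embedding} by observing that the polygon map $F_d$ is nothing more than $f_d$ applied coordinate-by-coordinate. For the forward direction, given $P=(\bv_1,\ldots,\bv_{n-1})\in V_d(\ell)$ and $T\in O(d)$, I would invoke Lemma \ref{lem:orbit_embedding} to produce $S\in SO(d+1)$ with $S(f_d(\bx))=f_d(T(\bx))$ for \emph{all} $\bx\in\R^d$; applying this to each vertex $\bv_i$ gives $S(f_d(\bv_i))=f_d(T(\bv_i))$ for every $i$, which is exactly $S(F_d(P))=F_d(T(P))$. Note that the same $S$ works for all vertices simultaneously because the conclusion of Lemma \ref{lem:orbit_embedding} is a statement about the linear map, not about individual points, so no compatibility issue arises here.

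For the converse, suppose $F_d(P)\in\im(F_d)$ and $S\in SO(d+1)$ satisfies $S(F_d(P))\in\im(F_d)$. The first thing to check is that $S$ carries $\im(f_d)$ into (hence onto) itself, so that the converse half of Lemma \ref{lem:orbit_embedding} applies. This is where a small amount of care is needed: knowing only that $S$ sends the particular tuple $(f_d(\bv_1),\ldots,f_d(\bv_{n-1}))$ into $\im(F_d)$ tells us $S(f_d(\bv_i))\in\im(f_d)$ for each $i$, i.e. $S$ maps $\Span(\{f_d(\bv_i)\})=f_d(\Span(\{\bv_i\}))$ into $\im(f_d)$, but if $\dim(P)<d$ this span is a proper subspace of $\im(f_d)$ and $S$ need not preserve all of $\im(f_d)$. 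The clean fix is to use Lemma \ref{lem:rotating_subspaces}: with $X=f_d(\Span(\{\bv_1,\ldots,\bv_{n-1}\}))$ and $Y=\im(f_d)$, both proper subspaces of $\R^{d+1}$ with $X\subset Y$ and $S(X)\subset Y$, we obtain $S'\in SO(d+1)$ with $S'(Y)=Y$ and $S'|_X=S|_X$. Then $S'(F_d(P))=S(F_d(P))$, and since $S'$ preserves $\im(f_d)$ the converse direction of Lemma \ref{lem:orbit_embedding} yields $T\in O(d)$ with $S'(f_d(\bx))=f_d(T(\bx))$ for all $\bx$; applying this to each $\bv_i$ gives $S(F_d(P))=S'(F_d(P))=F_d(T(P))$, as desired.

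I expect the only real subtlety to be exactly this point in the converse: the hypothesis ``$S(F_d(P))\in\im(F_d)$'' constrains $S$ only on the span of the polygon's vertices, which may be strictly smaller than $\im(f_d)$, so one cannot directly feed $S$ into Lemma \ref{lem:orbit_embedding}. Replacing $S$ by a rotation $S'$ that agrees with it on that span but additionally stabilizes $\im(f_d)$ — which is precisely what Lemma \ref{lem:rotating_subspaces} provides — resolves this. Everything else is a routine unwinding of the definition of $F_d$.
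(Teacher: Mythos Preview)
Your proposal is correct and follows essentially the same route as the paper's own proof: the forward direction is a direct vertex-by-vertex application of Lemma~\ref{lem:orbit_embedding}, and for the converse you correctly identify the subtlety that $S$ is only constrained on the span of the vertices, then resolve it exactly as the paper does by invoking Lemma~\ref{lem:rotating_subspaces} with $X=\Span(\{f_d(\bv_i)\})$ and $Y=\im(f_d)$ to replace $S$ by an $S'$ that stabilizes $\im(f_d)$, after which the converse half of Lemma~\ref{lem:orbit_embedding} applies.
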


\begin{proof}
Let $P=(\bv_1, \ldots, \bv_{n-1})\in V_d(\ell)$ and $T\in O(d)$. By Lemma \ref{lem:orbit_embedding} there exists $S\in SO(d+1)$ such that $S(f_d(\bv_i))=f_d(T(\bv_i))$ for all $i=1, \ldots, n-1$, and thus $S(F_d(P))=F_d(T(P))$.

Conversely, let $F_d(P)\in \im(F_d)$ and $S\in SO(d+1)$ such that $S(F_d(P))\in \im(F_d)$. Let $X=\Span(\{f_d(\bv_1), \ldots, f_d(\bv_{n-1})\})$ and $Y=\im(f_d)$. Then $X$ and $Y$ are proper linear subspaces of $\R^{d+1}$ and $X$ and $S(X)$ are subsets of $Y$. Thus by Lemma \ref{lem:rotating_subspaces} there exists $S'\in SO(d+1)$ such that $S'(Y)=Y$ and $S'\vert_X=S\vert_X$. Since
\[S'(\im(f_d))=\im(f_d),\] Lemma \ref{lem:orbit_embedding} says there exists $T\in O(d)$ such that $S'(f_d(\bx))=f_d(T(\bx))$ for all $\bx\in\R^d$, and thus $S'(F_d(P))=F_d(T(P))$. Since
\[S'|_{\Span(\{f_d(\bv_1), \ldots, f_d(\bv_{n-1})\})}=S\vert_{\Span(\{f_d(\bv_1), \ldots, f_d(\bv_{n-1})\})},\] we have $S(F_d(P))=F_d(T(P))$.

\end{proof}

\begin{proposition}\label{prop:polygon_orbit_embedding}
Let $P\in V_d(\ell)$. The preimage under $F_d$ of the $SO(d+1)$ orbit of $F_d(P)$ is the $O(d)$ orbit of $P$:
\[F_d^{-1}(SO(d+1)(F_d(P)))=O(d)(P).\]
\end{proposition}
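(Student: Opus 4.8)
The plan is to prove the two inclusions separately, using Lemma \ref{lem:polygon_orbit_embedding} as the main engine. For the inclusion $O(d)(P)\subseteq F_d^{-1}(SO(d+1)(F_d(P)))$, I would take an arbitrary $Q=T(P)$ with $T\in O(d)$ and show $F_d(Q)\in SO(d+1)(F_d(P))$. This is essentially the forward direction of Lemma \ref{lem:polygon_orbit_embedding}: it produces $S\in SO(d+1)$ with $S(F_d(P))=F_d(T(P))=F_d(Q)$, so $F_d(Q)$ lies in the $SO(d+1)$ orbit of $F_d(P)$, which is exactly what it means for $Q$ to be in the preimage.

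For the reverse inclusion $F_d^{-1}(SO(d+1)(F_d(P)))\subseteq O(d)(P)$, I would take $Q\in V_d(\ell)$ with $F_d(Q)\in SO(d+1)(F_d(P))$, i.e.\ $F_d(Q)=S(F_d(P))$ for some $S\in SO(d+1)$. Since $F_d(Q)\in\im(F_d)$, we have $S(F_d(P))\in\im(F_d)$, so the converse direction of Lemma \ref{lem:polygon_orbit_embedding} applies and yields $T\in O(d)$ with $S(F_d(P))=F_d(T(P))$. Hence $F_d(Q)=F_d(T(P))$, and since $F_d$ is injective (each $f_d$ is an isometry, hence injective, applied coordinatewise), we conclude $Q=T(P)\in O(d)(P)$.

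The only subtlety — and the step I'd expect to need a sentence of care — is the injectivity of $F_d$ used at the end of the reverse inclusion: one must note that $F_d$ is built from the linear isometry $f_d$ applied to each vertex, and linear isometries are injective, so $F_d(Q)=F_d(T(P))$ genuinely forces $Q=T(P)$ rather than merely $[Q]=[T(P)]$. Everything else is a direct application of Lemma \ref{lem:polygon_orbit_embedding} in its two directions, so the proof is short; the real content has already been absorbed into that lemma (and, through it, into Lemmas \ref{lem:orbit_embedding} and \ref{lem:rotating_subspaces}).
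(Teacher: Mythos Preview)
Your proposal is correct and follows essentially the same approach as the paper: both inclusions are proved by invoking the two directions of Lemma~\ref{lem:polygon_orbit_embedding}, with the injectivity of $F_d$ used to finish the reverse inclusion. The only cosmetic difference is the order in which the inclusions are treated.
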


\begin{proof}
Let $P\in V_d(\ell)$. We begin with the inclusion $F_d^{-1}(SO(d+1)(F_d(P)))\subset O(d)(P)$. Let $Q\in F_d^{-1}(SO(d+1)(F_d(P)))$. Then $F_d(Q)=S(F_d(P))$ for some $S\in SO(d+1)$. Since $F_d(P)$ and $S(F_d(P))$ are both in the image of $F_d$, by Lemma \ref{lem:polygon_orbit_embedding} there exists $T\in O(d)$ such that $F_d(T(P))=S(F_d(P))$, and thus $F_d(T(P))=F_d(Q)$. Since $F_d$ is injective, $T(P)=Q$, and so $Q\in O(d)(P)$.

For the inclusion $ O(d)(P)\subset F_d^{-1}(SO(d+1)(F_d(P)))$, let $Q\in O(d)(P)$ and let $T\in O(d)$ such that $Q=T(P)$. By Lemma \ref{lem:polygon_orbit_embedding} there exists $S\in SO(d+1)$ such that $S(F_d(P))=F_d(T(P))$. Thus $S(F_d(P))=F_d(Q)$, so $Q\in F_d^{-1}(SO(d+1)(F_d(P)))$.
\end{proof}

In the context of the map $\varphi_d:M_d(\ell)\to M_{d+1}(\ell)$, Proposition \ref{prop:polygon_orbit_embedding} says that $\varphi_d$ is not a priori injective. It says the preimage of $[F_d(P)]\in M_{d+1}(\ell)$ in $M_d(\ell)$ is the $\pi_d$ projection of the $O(d)$ orbit of $P$, which may be strictly larger than its $SO(d)$ orbit. However, as the next proposition shows, this is only the case if the dimension of $P$ matches the dimension $d$ of the ambient space.

\begin{proposition}\label{prop:equal_orbits}
Let $P\in V_d(\ell)$. Then $O(d)(P)=SO(d)(P)$ if and only if $\dim(P)<d$.
\end{proposition}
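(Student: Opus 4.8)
The plan is to prove the two implications separately. The easy direction is: if $\dim(P)=d$, then $O(d)(P)\neq SO(d)(P)$. To see this, suppose $\dim(P)=d$, so the vertices $\bv_1,\ldots,\bv_{n-1}$ span all of $\R^d$. Pick any reflection $R\in SO(d)^-$; I claim $R(P)\notin SO(d)(P)$. Indeed, if $R(P)=S(P)$ for some $S\in SO(d)$, then $S^{-1}R$ fixes every $\bv_i$, hence fixes a spanning set of $\R^d$, so $S^{-1}R$ is the identity and $R=S\in SO(d)$, contradicting $R\in SO(d)^-$. Thus $R(P)\in O(d)(P)\setminus SO(d)(P)$, and taking contrapositives gives ``$O(d)(P)=SO(d)(P)\implies\dim(P)<d$''.

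For the converse, suppose $\dim(P)=k<d$. The inclusion $SO(d)(P)\subseteq O(d)(P)$ is automatic, so it suffices to show $SO(d)^-(P)\subseteq SO(d)(P)$. Let $T\in SO(d)^-$; I want $S\in SO(d)$ with $S(P)=T(P)$, i.e. $S(\bv_i)=T(\bv_i)$ for all $i$. Let $W=\Span(\{\bv_1,\ldots,\bv_{n-1}\})$, a $k$-dimensional subspace with $k<d$, so $W$ is proper. The restriction $T|_W:W\to T(W)$ is a linear isometry onto the $k$-dimensional subspace $T(W)$. The idea is to build $S\in SO(d)$ that agrees with $T$ on $W$: choose an orthonormal basis $\bu_1,\ldots,\bu_k$ of $W$, extend to an orthonormal basis $\bu_1,\ldots,\bu_d$ of $\R^d$; then $T(\bu_1),\ldots,T(\bu_k)$ is an orthonormal basis of $T(W)$, which we extend to an orthonormal basis $T(\bu_1),\ldots,T(\bu_k),\bw_{k+1},\ldots,\bw_d$ of $\R^d$. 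There is a unique $S\in O(d)$ with $S(\bu_i)=T(\bu_i)$ for $i\le k$ and $S(\bu_i)=\bw_i$ for $i>k$; the freedom in choosing $\bw_{k+1},\ldots,\bw_d$ (we may replace one of them by its negative) lets us arrange $\det S=1$, which is where $k<d$ is used. Then $S|_W=T|_W$, so $S(\bv_i)=T(\bv_i)$ for every $i$ and hence $S(P)=T(P)$, giving $T(P)\in SO(d)(P)$. This shows $SO(d)^-(P)\subseteq SO(d)(P)$, and with \eqref{eqn:partition1} we get $O(d)(P)=SO(d)(P)$.

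I expect the only real point of care to be the sign-of-determinant bookkeeping in the converse: one must check that the extra dimension ($k<d$) genuinely provides enough slack to correct the determinant to $+1$ regardless of whether $T\in SO(d)$ or $T\in SO(d)^-$. Concretely, if the naive extension $S$ has $\det S=-1$, replacing $\bw_d$ by $-\bw_d$ flips the determinant without disturbing the agreement $S|_W=T|_W$; this replacement is available precisely because there is at least one index $>k$, i.e. $k<d$. Everything else is routine linear algebra, and in fact this argument parallels Lemma \ref{lem:rotating_subspaces} (applied with $X=W$ and any proper $Y\supseteq W\cup T(W)$, which exists since $\dim(W)+\dim T(W)$ may be as large as $2k$, but $Y$ only needs to contain both and be proper — if $2k<d$ take $Y=W+T(W)$, and if $2k\ge d$ one instead invokes the basis-extension argument above directly). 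To keep the exposition clean I would present the direct basis-extension argument rather than route through Lemma \ref{lem:rotating_subspaces}.
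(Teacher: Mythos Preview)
Your proof is correct and follows essentially the same approach as the paper's. Both directions match: for $\dim(P)=d$ you argue that $T(P)=S(P)$ forces $T=S$ since the vertices span $\R^d$, and for $\dim(P)<d$ you build $S\in SO(d)$ agreeing with $T$ on an orthonormal basis of $\Span(\{\bv_i\})$; the paper does the same, simply citing the earlier-stated fact that $SO(d)$ acts transitively on orthonormal $k$-frames for $k<d$ rather than spelling out the determinant-flip as you do. Your aside about routing through Lemma~\ref{lem:rotating_subspaces} is unnecessary (the paper does not use it here), and your instinct to present the direct basis-extension argument is the right call.
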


\begin{proof}
Let $P=(\bv_1, \ldots, \bv_{n-1})\in V_d(\ell)$. Given the decomposition
\[O(d)(P)=SO(d)(P)\cup SO(d)^-(P)\]
it suffices to show that $SO(d)^-(P)\subset SO(d)(P)$ if and only if $\dim(P)<d$. Suppose $\dim(P)<d$. Let $Q\in SO(d)^-(P)$, and let $T\in SO(d)^-$ such that $T(P)=Q$. Let $B_1=\{\bu_1, \ldots, \bu_{\dim(P)}\}$ be an orthonormal basis for $\Span(\{\bv_1, \ldots, \bv_{n-1}\})$. Then $B_2=\{T(\bu_1), \ldots, T(\bu_{\dim(P)})\}$ is an orthonormal basis for $\Span(\{T(\bv_1), \ldots, T(\bv_{n-1})\})$. Since $\dim(P)<d$, $B_1$ and $B_2$ are orthonormal bases of proper subspaces of $\R^d$, so there exists $S\in SO(d)$ such that $S(\bu_i)=T(\bu_i)$ for all $i=1, \ldots, \dim(P)$. Thus $S(\bv_i)=T(\bv_i)$ for all $i=1, \ldots, n-1$, so $S(P)=T(P)$, and thus $Q\in SO(d)(P)$.

Now suppose $\dim(P)=d$, and suppose for a contradiction that $SO(d)^-(P)\subset SO(d)(P)$. Then given $T\in SO(d)^-$ there exists $S\in SO(d)$ such that $T(P)=S(P)$, and thus $T(\bv_i)=S(\bv_i)$ for all $i=1, \ldots, n-1$. But since $\dim(P)=d$, some subset of vertices of $P$ form a basis for $\R^d$, and thus $T=S$, a contradiction.
\end{proof}

\begin{remark}\label{rem:injective}
Propositions $\ref{prop:polygon_orbit_embedding}$ and $\ref{prop:equal_orbits}$ imply that $\varphi_d$ is injective on the set of polygons of dimension less than $d$. In Proposition $\ref{prop:injective}$ in the following section, we state a stronger result and describe explicitly the preimage under $\varphi_d$ of $\varphi_d([P])$ for any $[P]\in M_d(\ell)$.
\end{remark}

\section{Stabilization theorem}\label{sec:stable}

In this section we give our main result, stated as Theorem \ref{thm:stable}, which says that aside from trivial cases the directed system $\langle M_d(\ell), \varphi_d\rangle_{d\geq 2}$ of moduli spaces of $\ell$-gons stabilizes when $d$ is equal to the length of $\ell$. Remarks \ref{rem:surjective} and \ref{rem:injective} summarize our results so far: the map $\varphi_d$ is surjective if and only if $d\geq n-1$, and injective on the set of polygons of dimension less than $d$. We begin by formalizing Remarks \ref{rem:surjective} and \ref{rem:injective} with Propositions \ref{prop:surjective} and \ref{prop:injective}, respectively.

\begin{proposition}\label{prop:surjective}
Let $\ell\in\interior(C_n)$. The map $\varphi_d:M_d(\ell)\to M_{d+1}(\ell)$ is surjective if and only if $d\geq n-1$.
\end{proposition}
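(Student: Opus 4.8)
The plan is to prove both directions by leveraging Propositions~\ref{prop:dimension} and~\ref{prop:dimensionful} about which dimensions of polygons occur in $V_d(\ell)$, together with the observation that $\varphi_d$ sends $[P]$ to $[F_d(P)]$ and that $F_d$ raises no dimensions (it is an isometric embedding onto a hyperplane), so $\dim(F_d(P))=\dim(P)$.

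For the forward direction (contrapositive), suppose $d<n-1$, i.e. $d\leq n-2$. Since $\ell\in\interior(C_n)$ and $d+1\leq n-1$, Proposition~\ref{prop:dimensionful} guarantees a polygon $Q\in V_{d+1}(\ell)$ with $\dim(Q)=d+1$. I claim $[Q]$ is not in the image of $\varphi_d$: if $[Q]=\varphi_d([P])=[F_d(P)]$ for some $P\in V_d(\ell)$, then $Q$ and $F_d(P)$ lie in the same $SO(d+1)$-orbit, so $\dim(Q)=\dim(F_d(P))=\dim(P)\leq d$ by Proposition~\ref{prop:dimension}, contradicting $\dim(Q)=d+1$. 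Hence $\varphi_d$ is not surjective.

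For the reverse direction, suppose $d\geq n-1$ and let $[Q]\in M_{d+1}(\ell)$ with $Q=(\bw_1,\ldots,\bw_{n-1})\in V_{d+1}(\ell)$. By Proposition~\ref{prop:dimension}, $\dim(Q)\leq\min\{n-1,d+1\}=n-1\leq d$, so $\Span(\{\bw_1,\ldots,\bw_{n-1}\})$ is a proper linear subspace of $\R^{d+1}$ of dimension at most $d$. The image $\im(f_d)$ is also a $d$-dimensional (hence proper) subspace of $\R^{d+1}$ containing a subspace of the same dimension as $\Span(\{\bw_i\})$; since $SO(d+1)$ acts transitively on $k$-dimensional subspaces for $k<d+1$, and using Lemma~\ref{lem:rotating_subspaces} to fix $\im(f_d)$ if needed, there exists $S\in SO(d+1)$ with $S(\Span(\{\bw_i\}))\subseteq\im(f_d)$. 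Then $S(\bw_i)\in\im(f_d)$ for each $i$, so we may set $\bv_i=f_d^{-1}(S(\bw_i))$ and $P=(\bv_1,\ldots,\bv_{n-1})$; since $f_d$ and $S$ are isometries, $P\in V_d(\ell)$, and $F_d(P)=S(Q)$, so $\varphi_d([P])=[F_d(P)]=[S(Q)]=[Q]$. Thus $\varphi_d$ is surjective.

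The main obstacle is the reverse direction's geometric maneuver: arranging an element of $SO(d+1)$ that carries the (at most $(n-1)$-dimensional) span of $Q$ into the hyperplane $\im(f_d)$. The point is that $\dim Q\leq n-1\leq d < d+1$, so the span of $Q$ has dimension strictly less than the ambient dimension and strictly less than $\dim\im(f_d)$ is not required—only $\leq$—so a rotation can always tilt it into the hyperplane. This is exactly the kind of statement packaged by Lemma~\ref{lem:rotating_subspaces} and the transitivity of $SO(d+1)$ on proper subspaces of a fixed dimension; the rest is bookkeeping with the isometry $f_d$.
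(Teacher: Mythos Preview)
Your proof is correct and follows essentially the same approach as the paper: the non-surjectivity for $d<n-1$ comes from the existence of a $(d+1)$-dimensional polygon in $V_{d+1}(\ell)$ via Proposition~\ref{prop:dimensionful}, and surjectivity for $d\geq n-1$ comes from rotating the span of any $Q\in V_{d+1}(\ell)$ (which has dimension at most $n-1\leq d$ by Proposition~\ref{prop:dimension}) into $\im(f_d)$ and pulling back along $f_d$. The only cosmetic difference is that the paper extends $\Span\{\bw_i\}$ to a full $d$-dimensional subspace $X$ and then maps $X$ onto $\im(f_d)$ by transitivity, whereas you map $\Span\{\bw_i\}$ directly onto a same-dimension subspace of $\im(f_d)$; your invocation of Lemma~\ref{lem:rotating_subspaces} is unnecessary here, since transitivity of $SO(d+1)$ on proper subspaces of a fixed dimension already suffices.
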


\begin{proof}
Let $d< n-1$. By Proposition \ref{prop:dimensionful}, $M_{d+1}(\ell)$ contains $(d+1)$-dimensional polygons. Since $M_d(\ell)$ does not contain $(d+1)$-dimensional polygons, and since $\dim(\varphi_d([P]))=\dim([P])$, $\varphi_d$ is not surjective. Now let $d\geq n-1$, and let $[P]=[\bv_1, \ldots, \bv_{n-1}]\in M_{d+1}(\ell)$. We will show that $[P]\in\im(\varphi_d)$. By Proposition \ref{prop:dimension} we have $\dim(P)<d+1$, so there is a $d$-dimensional linear subspace $X\subset\R^{d+1}$ such that $\bv_i\in X$ for all $i=1, \ldots, n-1$. Let $T\in SO(d+1)$ such that $T(X)=\im(f_d)$. Then $T(\bv_i)\in\im(f_d)$ for all $i=1, \ldots, n-1$, so we may write
\[T(P)=(T(\bv_1), \ldots, T(\bv_{n-1}))=(f_d(\bw_1), \ldots, f_d(\bw_{n-1}))\]
for some $\bw_1, \ldots, \bw_{n-1} \in\R^d$. Since $(f_d(\bw_1), \ldots, f_d(\bw_{n-1}))\in V_{d+1}(\ell)$ and $f_d$ is a linear isometry, $(\bw_1, \ldots, \bw_{n-1})\in V_d(\ell)$. Thus $T(P)=F_d((\bw_1, \ldots, \bw_{n-1}))\in\im(F_d)$ and $[T(P)]\in\im(\varphi_d)$. Finally, since $T\in SO(d+1)$, $[T(P)]=[P]$.
\end{proof}

\begin{proposition}\label{prop:injective}
The map $\varphi_d:M_d(\ell)\to M_{d+1}(\ell)$ is $2$-to-$1$ on the set of $d$-dimensional polygons and $1$-to-$1$ elsewhere.
\end{proposition}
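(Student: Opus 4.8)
The plan is to identify, for an arbitrary $P \in V_d(\ell)$, the full fiber $\varphi_d^{-1}(\varphi_d([P]))$ and then count the $SO(d)$-orbits it contains. By definition of $\varphi_d$, a class $[Q]$ lies in this fiber exactly when $[F_d(Q)] = [F_d(P)]$ in $M_{d+1}(\ell)$, i.e.\ when $F_d(Q) \in SO(d+1)(F_d(P))$; hence
\[\varphi_d^{-1}(\varphi_d([P])) = \pi_d\!\left(F_d^{-1}\big(SO(d+1)(F_d(P))\big)\right).\]
Proposition \ref{prop:polygon_orbit_embedding} identifies the set inside the $\pi_d$ with $O(d)(P)$, so the fiber is $\pi_d(O(d)(P))$, the set of $SO(d)$-orbits contained in $O(d)(P)$.

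Next I would show $O(d)(P)$ is a union of at most two $SO(d)$-orbits. Fixing any $T \in SO(d)^-$, equation \eqref{eqn:partition2} gives $O(d)(P) = SO(d)(P) \cup SO(d)(T(P))$, manifestly a union of two $SO(d)$-orbits (possibly equal). Since orbits of a group action are pairwise equal or disjoint, $\pi_d(O(d)(P))$ has one or two elements, with two elements precisely when $SO(d)(T(P)) \neq SO(d)(P)$, i.e.\ precisely when $SO(d)^-(P) \not\subset SO(d)(P)$.

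I would then apply Proposition \ref{prop:equal_orbits}, according to which $O(d)(P) = SO(d)(P)$ --- equivalently $SO(d)^-(P) \subset SO(d)(P)$ --- holds if and only if $\dim(P) < d$. Combined with the previous paragraph: the fiber $\varphi_d^{-1}(\varphi_d([P]))$ is a single point when $\dim(P) < d$, and consists of exactly two points, namely $[P]$ and $[T(P)]$ for any $T \in SO(d)^-$, when $\dim(P) = d$. Finally, since $\dim(\varphi_d([P])) = \dim([P])$ and dimension is preserved by elements of $O(d)$, both points of a two-element fiber have the same dimension as $[P]$; and since Proposition \ref{prop:dimension} forbids any polygon in $V_d(\ell)$ from having dimension exceeding $d$, the $d$-dimensional classes are exactly where $\varphi_d$ fails to be injective, and it is $2$-to-$1$ there and $1$-to-$1$ on every class of smaller dimension. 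This also makes good on the promise in Remark \ref{rem:injective} to describe the fibers explicitly.

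The argument has essentially no obstacle: the content is carried by Propositions \ref{prop:polygon_orbit_embedding} and \ref{prop:equal_orbits}, together with the elementary fact that distinct orbits are disjoint (so that ``two orbits'' really means ``two points of $M_d(\ell)$''). The only thing that demands a moment of care is the bookkeeping in the first step, confirming that the fiber really equals $\pi_d$ of the $F_d$-preimage of the relevant $SO(d+1)$-orbit before Proposition \ref{prop:polygon_orbit_embedding} is brought to bear.
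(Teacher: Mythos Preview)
Your proof is correct and follows essentially the same route as the paper: compute the fiber as $\pi_d(O(d)(P))$ via Proposition~\ref{prop:polygon_orbit_embedding}, split it into $\{[P],[T(P)]\}$ using \eqref{eqn:partition2}, and invoke Proposition~\ref{prop:equal_orbits} to decide when these coincide. The only differences are cosmetic---you spell out a bit more explicitly why the two orbits are either equal or disjoint, and you add the (harmless but not strictly needed) remark about dimension preservation.
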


\begin{proof}
Recall the diagram
\[
\begin{tikzcd}
V_d(\ell) \arrow[r, "F_d"] \arrow[d, "\pi_d"] & V_{d+1}(\ell) \arrow[d, "\pi_{d+1}"] \\
M_d(\ell) \arrow{r}{\varphi_d} & M_{d+1}(\ell)
\end{tikzcd}
\]
from Section \ref{sec:definitions}.
Tracing backwards from $M_{d+1}(\ell)$ we have
\[\varphi_d^{-1}\circ\varphi_d([P])=\pi_d\circ F_d^{-1}\circ \pi_{d+1}^{-1} (\varphi_d([P])).\]
Since $\pi_{d+1}^{-1}(\varphi_d([P]))=SO(d+1)(F_d(P))$, and since Proposition \ref{prop:polygon_orbit_embedding} says
\[F_d^{-1}(SO(d+1)(F_d(P)))=O(d)(P)\]
we have
\[\varphi_d^{-1}\circ\varphi_d([P])=\pi_d(O(d)(P)).\]
Thus by the partition $O(d)(P)=SO(d)(P)\cup SO(d)(T(P))$ in \eqref{eqn:partition2} we have
\[\varphi_d^{-1}\circ\varphi_d([P])=\pi_d(SO(d)(P)\cup SO(d)(T(P)))=\{[P], [T(P)]\}\]
where $T$ is any element of $SO(d)^-$. The result now follows from Proposition \ref{prop:equal_orbits} which implies $[P]=[T(P)]$ if and only if $\dim(P)<d$.
\end{proof}

We will use Proposition \ref{prop:injective} in the proof of Theorem \ref{thm:stable} to show that $\varphi_d$ is injective when $d\geq n$, but it tells us more. The proposition along with its proof tells us exactly how $\varphi_d$ is {\em not} injective when $d<n$, namely, that reflections of $d$-dimensional polygons in $\R^d$ become identified in $\R^{d+1}$. We will see this exemplified in Section \ref{sec:example}. For now, we are finally ready to state our main theorem, and its proof is easy since we have done all the heavy lifting.

\begin{theorem}\label{thm:stable}
Let $\ell\in\interior(C_n)$. Then the directed system
\[M_2(\ell)\xrightarrow{\varphi_2} M_3(\ell)\xrightarrow{\varphi_3} M_4(\ell)\xrightarrow{\varphi_4}\cdots \xrightarrow{\varphi_{d-1}} M_d(\ell) \xrightarrow{\varphi_d}\cdots\]
of moduli spaces of $\ell$-gons stabilizes at $d=n$, that is, $M_d(\ell)\approx M_n(\ell)$ if and only if $d\geq n$.
\end{theorem}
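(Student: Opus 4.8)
The plan is to prove that the connecting map $\varphi_d\colon M_d(\ell)\to M_{d+1}(\ell)$ is a homeomorphism exactly when $d\ge n$; since a composite of homeomorphisms is a homeomorphism this gives $M_d(\ell)\approx M_n(\ell)$ for all $d\ge n$, while the failure for $d<n$ shows the system has not yet stabilized. For the direction $d\ge n$, I first observe that $\varphi_d$ is a continuous bijection: it is continuous by Proposition/Definition \ref{prop:direct_system}; it is surjective by Proposition \ref{prop:surjective} (since $d\ge n-1$); and it is injective because Proposition \ref{prop:dimension} forces $\dim(P)\le n-1<n\le d$ for every $P\in V_d(\ell)$, so $V_d(\ell)$ contains no $d$-dimensional polygon and hence Proposition \ref{prop:injective} makes $\varphi_d$ globally one-to-one.

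To upgrade a continuous bijection to a homeomorphism I would use compactness. The polygon space $V_d(\ell)$ is closed in $\R^{d(n-1)}$, being cut out by the closed conditions $\norm{\bv_i-\bv_{i-1}}=l_i$, and it is bounded since each vertex satisfies $\norm{\bv_i}\le l_1+\cdots+l_i$; hence $V_d(\ell)$ is compact and so is its continuous image $M_d(\ell)=\pi_d(V_d(\ell))$. On the other hand $M_{d+1}(\ell)$ is Hausdorff, being the quotient of the compact Hausdorff space $V_{d+1}(\ell)$ by the continuous action of the compact group $SO(d+1)$. A continuous bijection from a compact space to a Hausdorff space is a homeomorphism, so $\varphi_d$ is a homeomorphism for every $d\ge n$; composing $\varphi_n,\varphi_{n+1},\dots,\varphi_{d-1}$ yields $M_d(\ell)\approx M_n(\ell)$ whenever $d\ge n$.

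For the converse, suppose $2\le d<n$, so that $d=\min\{d,n-1\}$. Then Proposition \ref{prop:dimensionful} (here we use $\ell\in\interior(C_n)$) provides a $d$-dimensional polygon $P\in V_d(\ell)$, and Proposition \ref{prop:injective} says $\varphi_d$ is exactly two-to-one at $[P]$: $[P]\ne[T(P)]$ for $T\in SO(d)^-$ while $\varphi_d([P])=\varphi_d([T(P)])$. Thus $\varphi_d$ is not injective, a fortiori not a homeomorphism, and the directed system has not stabilized at stage $d$. One can moreover detect the change of homeomorphism type directly: stratifying $M_d(\ell)$ by polygon dimension, the stratum of $k$-dimensional polygons has a dimension depending only on $n$ and $k$ (for $2\le k\le\min\{d,n-1\}$) and strictly increasing in $k$, so $\dim M_d(\ell)$ strictly increases with $d$ up to $d=n-1$ and is constant thereafter; hence $M_d(\ell)\not\approx M_n(\ell)$ for $d\le n-2$ (and for $n=3$, $d=2$ already a point count suffices).

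I expect the genuine content to be entirely absorbed into the earlier propositions: the only thing making the $d\ge n$ case ``easy'' is knowing that \emph{no} $d$-dimensional polygon exists in $V_d(\ell)$ once $d\ge n$ (the dimension bound, Proposition \ref{prop:dimension}) together with the fact that the sole source of non-injectivity is the reflection of a top-dimensional polygon (Proposition \ref{prop:injective}); everything else is the soft compact-to-Hausdorff principle. The one genuinely delicate point, if one insists on the abstract non-homeomorphism $M_{n-1}(\ell)\not\approx M_n(\ell)$ rather than merely that $\varphi_{n-1}$ fails to be a homeomorphism, is the boundary case $d=n-1$, where the dimension count no longer separates the two spaces; there one would instead argue that $\varphi_{n-1}$ effects a nontrivial two-to-one identification over the (nonempty, open) stratum of full-dimensional polygons, obstructing any homeomorphism, e.g.\ by a local homology or covering-space argument on that stratum.
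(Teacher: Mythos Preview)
Your argument is essentially the paper's: both reduce the theorem to showing that $\varphi_d$ is a bijection if and only if $d\ge n$, invoking Proposition~\ref{prop:surjective} for surjectivity and Propositions~\ref{prop:dimension}, \ref{prop:dimensionful}, and \ref{prop:injective} for injectivity. You are in fact more careful than the paper on two points---you supply the compact-to-Hausdorff argument upgrading a continuous bijection to a homeomorphism (the paper simply asserts ``homeomorphism'' after checking bijectivity), and you distinguish between ``$\varphi_d$ fails to be a homeomorphism'' and ``$M_d(\ell)\not\approx M_n(\ell)$ abstractly'' for $d<n$, whereas the paper proves only the former.
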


\begin{proof}
We show that for $\ell\in\interior(C_n)$ the map $\varphi_d:M_d(\ell)\to M_{d+1}(\ell)$ is a homeomorphism if and only if $d\geq n$. Let $\ell\in\interior(C_n)$. By Proposition \ref{prop:surjective} $\varphi_d$ is surjective if and only if $d\geq n-1$, so it remains to show that $\varphi_d$ is injective if and only if $d\geq n$. By Proposition \ref{prop:injective}, $\varphi_d$ is injective if and only if $M_d(\ell)$ does not contain $d$-dimensional polygons. By Proposition \ref{prop:dimensionful} this happens if and only if $d\geq n$.
\end{proof}

\section{Moduli spaces of equilateral $4$-gons}\label{sec:example}

We end with an example of a directed system of moduli spaces of $\ell$-gons, in the hope that it aids the reader's intuition for these systems. Let $\ell=(1,1,1,1)\in\interior(C_4)$. Theorem \ref{thm:stable} says that the directed system of moduli spaces of $\ell$-gons stabilizes at $d=4$, and thus we have
\[M_2(\ell)\xrightarrow{\varphi_2} M_3(\ell)\xrightarrow{\varphi_3} M_4(\ell) \approx M_5(\ell) \approx M_6(\ell) \approx \cdots.\]
Subsections \ref{subsec:R_2}, \ref{subsec:R_3}, and \ref{subsec:R_4} give descriptions of the moduli spaces $M_2(\ell), \, M_3(\ell)$, and the stable limit $M_4(\ell)$, respectively, and Figure \ref{fig:R_2_3_4} provides a picture of each. In the figure, several points in the moduli spaces are marked with their corresponding polygons drawn next to them. A given picture should not be thought of as a static image, but as flows of polygons smoothly transforming into one another (see \cite{millson2} for more on Hamiltonian flows in polygon spaces). Also, the pictures should not be thought of as wholly distinct from one another, but instead as being related to each other by the identification of some polygons and the arising of others at each step in the directed system.

\subsection{$M_2(\ell)$}\label{subsec:R_2}

The moduli space $M_2(\ell)$ of $\ell$-gons in $\R^2$ is homeomorphic to three circles, every two of which meet at a single point (\cite{millson}, p15). The points of intersection correspond to $1$-dimensional polygons. A given circle is partitioned by these points of intersection into two arcs of $2$-dimensional polygons, where the polygons along one arc are reflections of the polygons along the other arc. See the polygons next to the marked points in $M_2(\ell)$ in the top left image of Figure \ref{fig:R_2_3_4} and try to imagine the polygons that are not drawn. For example, the curves going from the square at the top of the picture to either endpoint of the arc it is sitting on, those consist of parallelograms that get narrower and narrower as you approach the $1$-dimensional degenerate parallelograms at the arc's endpoints.

\subsection{$M_3(\ell)$}\label{subsec:R_3}

The moduli space $M_3(\ell)$ is homeomorphic to a sphere. As we move from polygons in $\R^2$ to polygons in $\R^3$, two things happen. First, every $2$-dimensional polygon in $M_2(\ell)$ becomes identified with its reflection, and thus each pair of arcs comprising a circle in $M_2(\ell)$ is projected onto a single arc in $M_3(\ell)$; these three arcs make up the equator in $M_3(\ell)$. Second, $3$-dimensional polygons appear that did not exist in $M_2(\ell)$; they comprise the two hemispheres. Analogous to the arc pairs consisting of reflected polygons in $\R^2$, the hemisphere pairs consist of reflected polygons in $\R^3$---every polygon in the southern hemisphere has its reflection in the northern hemisphere.

\subsection{$M_d(\ell)$, $d\geq 4$}\label{subsec:R_4}

Next, we arrive at the moduli space $M_4(\ell)$ which is homeomorphic to a disc. As we move from $\R^3$ to $\R^4$, no new polygons appear because the maximum dimension of a $4$-gon is $3$---in other words $\varphi_3$ is surjective. However, $\varphi_3$ is not injective, since $3$-dimensional polygons and their reflections in $M_3(\ell)$ get identified in $M_4(\ell)$. Thus the northern and southern hemispheres in $M_3(\ell)$ collapse to form the interior of the disk. This disk is the stable limit. As we move forward to $M_5(\ell)$ no new polygons appear because they all appeared back in $M_3(\ell)$. Moreover, no new identifications are made: if $\varphi_4([P])=\varphi_4([Q])$ then $P$ and $Q$ are rotations or reflections of each other in $\R^4$, but since their dimension is less than $4$ every reflection is obtained by rotation, so $[P]=[Q]$.

%\begin{figure}[h]
%\centering
%\includestandalone[scale=.8]{./R_2}
%\caption{The moduli space $M_2((1,1,1,1))$.} 
%\label{fig:R_2}
%\end{figure}
%
%\begin{figure}[h]
%\centering
%\includestandalone[scale=.8]{./R_3}
%\caption{The moduli space $M_3((1,1,1,1))$.} 
%\label{fig:R_3}
%\end{figure}
%
%\begin{figure}[h]
%\centering
%\includestandalone[scale=1]{./R_4}
%\caption{The moduil space $M_4(\ell)$ is homeomorphic to a disk. This is the stable limit of the directed system of moduli spaces of $\ell$-gons for $\ell=(1,1,1,1)$.} 
%\label{fig:R_4}
%\end{figure}

\subsection{Questions}\label{subsec:questions}
We conclude with some questions inspired by this example. First, is the stable limit always contractible and connected? Does it always have boundary, and if so does the interior consist of polygons of maximum dimension? We also notice that the $1$-dimensional polygons partition $M_2(\ell)$ into connected components of $2$-dimensional polygons, and the $1$- and $2$-dimensional polygons partition $M_3(\ell)$ into connected components of $3$-dimensional polygons. Can this be generalized? The answers to all these questions may come from an algebro-geometric view of polygon spaces. It is not mentioned earlier, but $V_d(\ell)$ is a real algebraic variety, and thus $M_d(\ell)$ is real semi-algebraic space. Is there an analogous directed system of semi-algebraic spaces? Is there a correspondence between polygon dimension and the dimension of the (pieces of) subvarieties comprising $M_d(\ell)$?

\clearpage

\begin{figure}[h!]
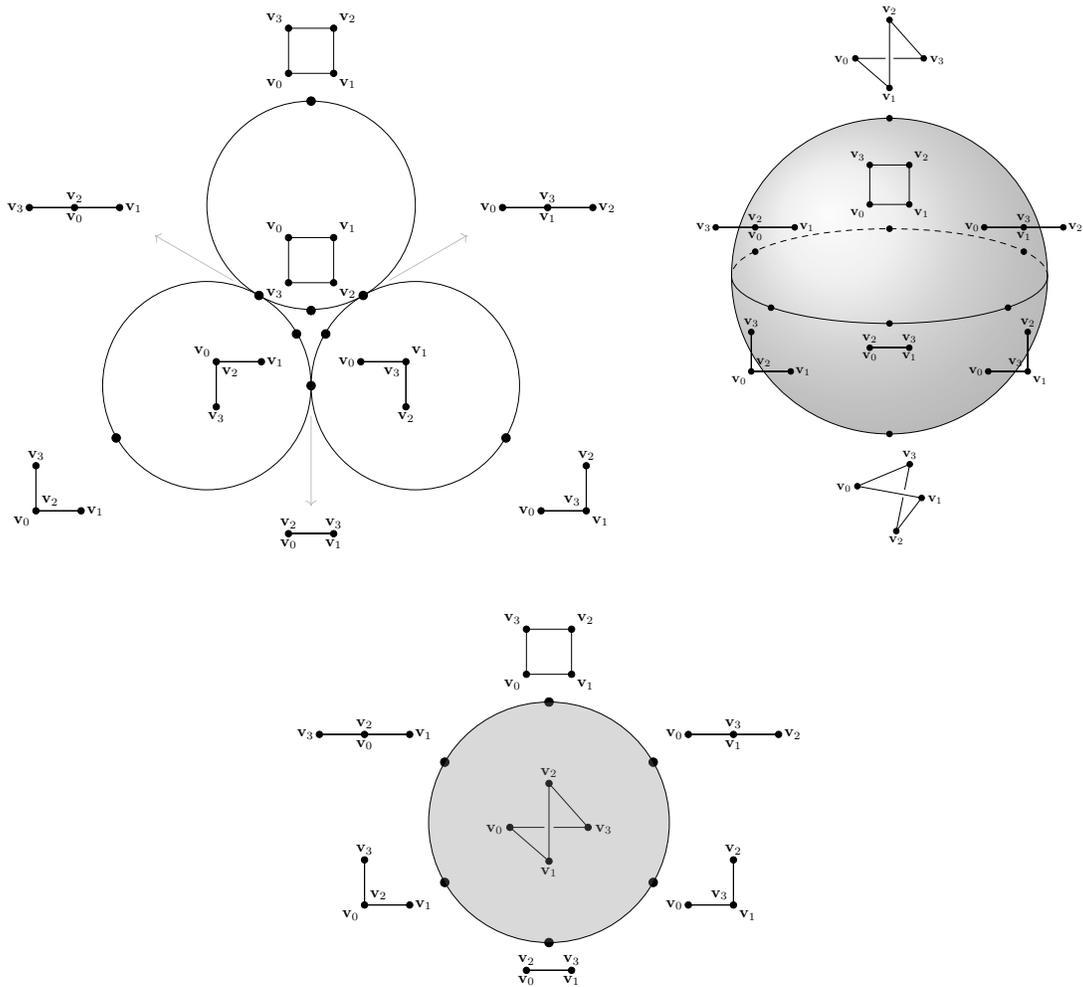

\centering
\includestandalone[scale=.8]{./R_2}\qquad
\includestandalone[scale=.7]{./R_3}
\vspace{.2in}

\includestandalone[scale=.8]{./R_4}
\caption{Clockwise from top-left, the moduli spaces $M_2(\ell)$, $M_3(\ell)$, and $M_d(\ell), d\geq 4$, for $\ell=(1,1,1,1)$.} 
\label{fig:R_2_3_4}
\end{figure}

%\newpage

\bibliographystyle{alpha}
\bibliography{references}

\Addresses

\end{document}